\newtheorem{theorem}{Theorem}
\newtheorem*{thmA}{Theorem}
\newtheorem{corollary}[theorem]{Corollary}
\theoremstyle{definition}
\theoremstyle{remark}
\newtheorem{remark}[theorem]{Remark}
\numberwithin{equation}{section}
\begin{document}

\title[]
 {Stability of the Blaschke-Santal\'{o} inequality in the plane}
\author[M.N. Ivaki]{Mohammad N. Ivaki}
\address{Institut f\"{u}r Diskrete Mathematik und Geometrie, Technische Universit\"{a}t Wien,
Wiedner Hauptstr. 8--10, 1040 Wien, Austria}
\curraddr{}
\email{mohammad.ivaki@tuwien.ac.at}

\subjclass[2010]{Primary 52A40, 52A10; Secondary 53A15}
\keywords{the Blaschke-Santal\'{o} inequality; stability of the Blaschke-Santal\'{o} inequality}

\begin{abstract}
We give a stability version of of the Blaschke-Santal\'{o} inequality in the plane.
\end{abstract}

\maketitle
\section{Introduction}
The setting of this paper is the $n$-dimensional Euclidean space. A compact convex subset of $\mathbb{R}^{n}$ with non-empty interior is called a \emph{convex body}. The set of convex bodies in $\mathbb{R}^{n}$ is denoted by $\mathcal{K}^n$. Write $\mathcal{K}^n_{e}$ for the set of origin-symmetric convex bodies and $\mathcal{K}^n_{0}$ for the set of convex bodies whose interiors contain the origin.

The support function of $K\in \mathcal{K}^n$, $h_K:\mathbb{S}^{n-1}\to\mathbb{R}$, is defined by \[h_{K}(u)=\max_{x\in K}\langle x,u\rangle,\]
where $\langle \cdot,\cdot\rangle$ stands for the usual inner product of $\mathbb{R}^n.$
The polar body, $K^{\ast}$, of $K\in \mathcal{K}^n_0$ is the convex body defined by
\[K^{\ast}=\{y\in\mathbb{R}^n: \langle x,y\rangle\leq 1 \mbox{~for~all~}x\in K\}.\]
For $x\in$ int $K$, let $K^x:=(K-x)^{\ast}.$ The Santal\'{o} point of $K$, denoted by $s$, is the unique point in int $K$ such that
$$V(K^s)\leq V(K^x)$$
for all $x\in$ int $K$. For a body $K\in\mathcal{K}_e^n$, the Santal\'{o} point is at the origin. The Blaschke-Santal\'{o} inequality \cite{BW1,Santalo} states that
\begin{equation*}
V(K^s)V(K)\leq \omega_{n}^2,
\end{equation*}
with equality if and only if $K$ is an ellipsoid. Here $\omega_{n}$ is the volume of $B$, the unit ball of $\mathbb{R}^n$. The equality condition was settled by Saint Raymond \cite{SR} in the symmetric case and Petty \cite{P1}
in the general case.

A natural tool in the affine geometry of convex bodies is the Banach-Mazur distance which for two convex bodies $K,\bar{K}\in \mathcal{K}^n$ is defined by
\[d_{\mathcal{BM}}(K,\bar{K})=\min\{ \lambda\geq 1: (K-x)\subseteq \Phi(\bar{K}-y)\subseteq \lambda(K-x),~\
\Phi\in GL(n),~x,y\in\mathbb{R}^n\}.\]
It is easy to see that $d_{\mathcal{BM}}(K, \Phi \bar{K})=d_{\mathcal{BM}}(K,\bar{K})$ for all $\Phi\in GL(n).$ Moreover, the Banach-Mazur distance is multiplicative. That is,  for $K_1,K_2,K_3\in \mathcal{K}^n_e$ the following inequality holds:
\[d_{\mathcal{BM}}(K_1,K_3)\leq d_{\mathcal{BM}}(K_1,K_2)d_{\mathcal{BM}}(K_2,K_3).\]
The main result of the paper is stated in the following theorem.
\begin{thmA}
There exist constants $\gamma,~\varepsilon_0>0$, such that the following holds: If $0<\varepsilon<\varepsilon_0$ and $K$ is a convex body in $\mathbb{R}^2$ such that $V(K^s)V(K)\geq \frac{\pi^2}{1+\varepsilon},$ then $d_{\mathcal{BM}}(K,B)\leq 1+\gamma \varepsilon^{\frac{1}{4}}$. Furthermore, if $K$ is an origin-symmetric body, then $d_{\mathcal{BM}}(K,B)\leq 1+\gamma\varepsilon^{\frac{1}{2}}.$
\end{thmA}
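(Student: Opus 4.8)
The plan is to carry out a second-variation analysis of the volume product near its unique maximiser, the disc, after a soft compactness argument has placed $K$ in the regime where it is already close to a disc. Throughout I identify a planar convex body with its support function $h=h_K\colon\mathbb S^1\to(0,\infty)$ and use the two planar formulas $V(K)=\tfrac12\int_0^{2\pi}\!\big(h^2-(h')^2\big)\,d\theta$ and, for $K\in\mathcal K^2_0$, $V(K^\ast)=\tfrac12\int_0^{2\pi}h^{-2}\,d\theta$. Step 1 (qualitative closeness): the functional $K\mapsto V(K^s)V(K)$ is $GL(2)$- and translation-invariant and continuous, so it descends to a continuous function on $\mathcal K^2$ modulo affine maps, which is compact (John position together with Blaschke selection). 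By the equality case of the Blaschke--Santal\'o inequality this function attains its maximum $\pi^2$ only at the affine class of $B$, so for every $\delta>0$ there is $\varepsilon_0>0$ with the property that $V(K^s)V(K)\ge\pi^2/(1+\varepsilon)$ and $\varepsilon<\varepsilon_0$ force $d_{\mathcal{BM}}(K,B)<1+\delta$. I fix $\delta$ small; the estimates below hold once $\delta$ (equivalently $\varepsilon_0$) is small enough.

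Step 2 (normalisation). Since $d_{\mathcal{BM}}(\,\cdot\,,B)$ and $V(K^s)V(K)$ are affine invariant, I replace $K$ by an affine image with $h=1+f$, chosen so that $f$ has no Fourier modes of order $0$, $1$, $2$: a symmetric positive-definite linear map removes the modes of orders $0$ and $2$, and then a translation removes the order-$1$ mode (which a translation merely shifts, leaving the others fixed). Thus $f=\sum_{k\ge3}(a_k\cos k\theta+b_k\sin k\theta)$; in the origin-symmetric case $f$ is even, so only the modes $k\ge4$ survive and no translation is used, keeping $K$ symmetric. By Step 1, $\|f\|_{\infty}$ is small, and the convexity constraint $1+f+f''\ge0$ then forces $f''$ to be a measure of bounded total variation, so that $\|f'\|_{L^2}^2=-\int f\,f''\le C\|f\|_{\infty}$; hence $\|f\|_{H^1}$ is a priori small as well, which is what lets me absorb cubic error terms below.

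Step 3 (the second variation, with the Santal\'o point). Writing $\rho_k:=a_k^2+b_k^2$ and expanding to second order,
\[
V(K)=\pi-\tfrac{\pi}{2}\sum_{k\ge3}(k^2-1)\rho_k,\qquad
V(K^\ast)=\pi+\tfrac{3\pi}{2}\sum_{k\ge3}\rho_k+O\!\big(\|f\|_{\infty}\|f\|_{L^2}^2\big),
\]
whence $V(K^\ast)V(K)=\pi^2-\tfrac{\pi^2}{2}\sum_{k\ge3}(k^2-4)\rho_k+O(\|f\|_{H^1}^3)$. Since $k^2-4\ge5>0$ and $k^2-4\ge c_0(1+k^2)$ for $k\ge3$ (for $k\ge4$ in the symmetric case), the quadratic form is coercive, $\sum_{k\ge3}(k^2-4)\rho_k\ge c_1\|f\|_{H^1}^2$, and with $\|f\|_{H^1}$ small the cubic error is absorbed: $V(K^\ast)V(K)\le\pi^2-c_2\|f\|_{H^1}^2$. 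It remains to pass from $K^\ast$ to $K^s$. With $f=f_{\ge3}$ the centroid of $K^\ast$ equals $\tfrac13\int h^{-3}u\,d\theta=O(\|f\|_{L^2}^2)$, the linear term vanishing by orthogonality to the order-$1$ mode; since $x\mapsto V((K-x)^\ast)$ is convex on $\operatorname{int}K$ with Hessian close to $3\pi\,\mathrm{Id}$ at $0$, its minimiser $s$ satisfies $|s|=O(\|f\|_{L^2}^2)$ and $V(K^s)=V(K^\ast)-O(\|f\|^4)$, so still $V(K^s)V(K)\le\pi^2-c_2\|f\|_{H^1}^2$ (in the symmetric case $s=0$ and this is immediate).

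Step 4 (conclusion) and the main difficulty. Combining Step 3 with the hypothesis gives $c_2\|f\|_{H^1}^2\le\pi^2-\pi^2/(1+\varepsilon)\le\pi^2\varepsilon$, so $\|f\|_{H^1}\le C\varepsilon^{1/2}$, and the one-dimensional Sobolev embedding yields $\|f\|_{\infty}\le C\|f\|_{H^1}\le C'\varepsilon^{1/2}$. Hence the normalised body satisfies $(1-C'\varepsilon^{1/2})B\subseteq K\subseteq(1+C'\varepsilon^{1/2})B$, so $d_{\mathcal{BM}}(K,B)\le1+\gamma\varepsilon^{1/2}$, which gives both assertions (with room to spare in the general case). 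The delicate points are the Santal\'o-point estimate in Step 3 — one must genuinely show the displacement of $s$ enters only at order $\|f\|^4$, not $\|f\|^2$ — and the justification of the exact normalisation in the non-symmetric case, where translation is consumed in removing the order-$1$ mode; if one prefers to avoid the sharp Santal\'o-point bookkeeping, a cruder route (symmetrise $K$, apply the symmetric case, and return) still proves the general statement, but only with the exponent $1/4$ recorded there.
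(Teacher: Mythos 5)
Your argument is essentially correct in outline, but it takes a genuinely different route from the paper. The paper never touches Fourier analysis: for origin-symmetric $\mathcal{C}^\infty_+$ bodies it combines Lutwak's inequality $V(K)V(K^{\ast})\leq \pi^2 V(\Lambda K)/V(K)$ with Groemer's stability version of Minkowski's mixed volume inequality to show that $h_{\Lambda K}f_{\Lambda K}^{1/3}$ is nearly constant, then invokes the author's earlier theorem sandwiching such a body between two ellipses of comparable area; the general case is then reduced to the symmetric one by contraposition via B\"or\"oczky's Steiner-symmetrization theorem together with Meyer--Pajor, which is exactly where the exponent degrades from $1/2$ to $1/4$. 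Your compactness-plus-second-variation (Fuglede-type) argument is more self-contained, works directly for arbitrary convex bodies without the $\mathcal{C}^\infty_+$ approximation, and — since the quadratic form $\sum_{k\ge 3}(k^2-4)\rho_k$ is coercive on all modes $k\ge3$, not just the even ones — would in fact yield the exponent $1/2$ in the non-symmetric case as well, which is stronger than what the theorem asserts. Two remarks on your own flagged worries. First, the Santal\'o-point bookkeeping is a non-issue: since $s$ minimizes $x\mapsto V((K-x)^{\ast})$, you have $V(K^s)\le V(K^{\ast})$ for your normalized origin, and this inequality already points in the direction you need, so $V(K^s)V(K)\le V(K^{\ast})V(K)\le \pi^2-c_2\|f\|_{H^1}^2$ with no fourth-order estimate on the displacement of $s$ required; the only role of killing the first harmonic is to prevent the positive coefficient $+\tfrac{3\pi^2}{2}\rho_1$ from entering the expansion of $V(K^{\ast})$. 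Second, the one step you should not wave at is the exact normalization: making the order-$0$ and order-$2$ Fourier modes vanish \emph{exactly} under the nonlinear action $h_{\Phi K}(u)=|\Phi^{T}u|\,h_K\bigl(\Phi^{T}u/|\Phi^{T}u|\bigr)$ requires an implicit-function (or degree) argument in a neighbourhood of the disc; this is standard but must be recorded, since the whole coercivity computation collapses if residual low modes of size comparable to $\|f\|$ survive. With those two points attended to, your proof is complete and, modulo routine error bookkeeping, sharper than the paper's in the general case.
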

In $\mathbb{R}^n$, $n\geq 3$, the stability of the Blaschke-Santal\'{o} inequality was first proved by K.J. B\"{o}r\"{o}czky \cite{B}, and then by K. Ball and K.J. B\"{o}r\"{o}czky \cite{BB2} with a better order of approximation (see also \cite{BBF} for the stability of functional forms of the Blaschke-Santal\'{o} inequality). In $\mathbb{R}^2$, a result has been obtained by K.J. B\"{o}r\"{o}czky and E. Makai \cite{BM} where the order of approximation in the origin-symmetric case is $1/3$ and in the general case is $1/6.$ Therefore, our main theorem provides a sharper stability result. Moreover, stability of the $p$-affine isoperimetric inequality also follows from the stability of the Blaschke-Santal\'{o} inequality (See \cite{Lutwak4,Sch} for definitions of the $p$-affine surface areas, and for the statements of the $p$-affine isoperimetric inequalities, and see also \cite{Ludiwg1,Ludiwg2} for their generalizations in the context of the Orlicz-Brunn-Minkowski theory, basic properties, and affine isoperimetric inequalities they satisfy.). Stability of the $p$-affine isoperimetric inequality, in the Hausdorff distance, for bodies in $\mathcal{K}_e^2$  was established by the author in \cite{Ivaki2} via the affine normal flow with the order of approximation equal to $3/10.$ Therefore, the main theorem here replaces $3/10$ by $1/2$ and extends that result, if $p>1$, to bodies with the Santal\'{o} points or centroids at the origin, and if $p=1$, to any convex body in $\mathcal{K}^2$. An application of such a stability result to some Monge-Amp\`{e}re functionals is given by Ghilli and Salani \cite{DP}.\newline\newline
\textbf{Acknowledgment.}
I am indebted to Monika Ludwig and the referee for the very careful reading of the original submission.
\section{Background material}
A convex body is said to be of class $\mathcal{C}^{k}_{+}$, for some $k\ge2$, if its boundary hypersurface is $k$-times continuously differentiable, in the sense of differential geometry, and the Gauss map $\nu:\partial K\to \mathbb{S}^{n-1}$, which takes $x$ on the boundary of $K$ to its unique outer unit normal vector $\nu(x)$, is well-defined and a $\mathcal{C}^{k-1}$-diffeomorphism.

Let $K,L$ be two convex bodies and $0<a<\infty$, then the Minkowski sum $K+aL$ is defined by $h_{K+aL}=h_K+ah_L$ and the mixed volume $V_1(K,L)$ ($V(K,L)$ for planar convex bodies) of $K$ and $L$ is defined by
\[V_1(K,L)=\frac{1}{n}\lim_{a\to0^{+}}\frac{V(K+aL)-V(K)}{a}.\]
A fundamental fact is that corresponding to each convex body $K$, there is a unique Borel measure $S_K$ on the unit sphere such that
\[V_1(K,L)=\frac{1}{n}\int_{\mathbb{S}^{n-1}}h_LdS_K\]
for any convex body $L$. The measure $S_K$ is called the surface area measure of $K.$

A convex body $K$ is said to have a positive continuous curvature function $f_K$, defined on the unit sphere, provided that for each convex body $L$
\[V_1(K,L)=\frac{1}{n}\int_{\mathbb{S}^{n-1}}h_Lf_Kd\sigma,\]
where $\sigma$ is the spherical Lebesgue measure on $\mathbb{S}^{n-1}.$ A convex body can have at most one curvature function; see \cite[p.~115]{bon}.
If $K$ is of class $\mathcal{C}^2_+$, then $S_K$ is absolutely continuous with respect to $\sigma$, and the Radon-Nikodym derivative $dS_K/d\sigma:\mathbb{S}^{n-1}\to\mathbb{R}$ is the reciprocal Gauss curvature of $\partial K$ (viewed as a function of the outer unit normal vectors). For every $K\in \mathcal{K}^{n},$ $V(K)=V_1(K,K).$

Of significant importance in convex geometry is the Minkowski mixed volume inequality. Minkowski's mixed volume inequality states that for $K,L\in\mathcal{K}^n,$
\[V_1(K,L)^n\geq V(K)^{n-1}V(L).\]
In the class of origin-symmetric convex bodies, equality holds if and only if $K=cL$ for some $c>0.$ In $\mathbb{R}^2$ a stronger version of Minkowski's inequality was obtained by Groemer \cite{Groemer1}. We provide his result for bodies in $\mathcal{K}_e^2:$
\begin{theorem}\cite{Groemer1}
Let $K,L\in\mathcal{K}_e^2$ and set $D(K)=2\max\limits_{\mathbb{S}^1}h_{K}$, then
\begin{align}\label{thm: groemer}
\frac{V(K,L)^2}{V(K)V(L)}-1&\geq \frac{V(K)}{4D^2(K)}\max_{u\in\mathbb{S}^1}\left|\frac{h_K(u)}{V(K)^{\frac{1}{2}}}-\frac{h_L(u)}{V(L)^{\frac{1}{2}}}\right|^2.
\end{align}
\end{theorem}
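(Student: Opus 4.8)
The plan is to convert \eqref{thm: groemer} into a Fourier estimate on the circle. By Hausdorff approximation it suffices to treat origin-symmetric $K,L$ of class $\mathcal{C}^2_+$, so that $V(K)=\tfrac12\int_0^{2\pi}(h_K^2-(h_K')^2)\,d\theta$, $V(K,L)=\tfrac12\int_0^{2\pi}(h_Kh_L-h_K'h_L')\,d\theta$ and $dS_K=(h_K+h_K'')\,d\theta\ge 0$. I write $t:=V(K,L)/\sqrt{V(K)V(L)}\ge 1$ (Minkowski), $\Phi:=h_K/\sqrt{V(K)}-h_L/\sqrt{V(L)}$, and $E(f):=\int_0^{2\pi}\big((f')^2-f^2\big)\,d\theta$; the left-hand side of \eqref{thm: groemer} is $t^2-1$.

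Next I would record two identities. Put $\lambda:=V(K,L)/V(K)$ and $g:=h_L-\lambda h_K$ (the support function of the formal difference $L-\lambda K$). Bilinearity of mixed volumes and $V(K,L-\lambda K)=V(K,L)-\lambda V(K)=0$ give $\int_0^{2\pi}g\,dS_K=0$ and, on expanding $V(L)=V\big(\lambda K+(L-\lambda K)\big)$,
\[
t^2-1=\frac{1}{2V(L)}\,E(g).
\]
Substituting $h_L=\lambda h_K+g$ into $\Phi$ and simplifying gives the affine identity $\Phi=-\tfrac{t-1}{\sqrt{V(K)}}\,h_K-\tfrac{g}{\sqrt{V(L)}}$, whence, since $\max_{\mathbb{S}^1}h_K=D(K)/2$,
\[
\max_{\mathbb{S}^1}|\Phi|\le (t-1)\,\frac{D(K)}{2\sqrt{V(K)}}+\frac{\max_{\mathbb{S}^1}|g|}{\sqrt{V(L)}}.
\]

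The crux is the inequality $\max_{\mathbb{S}^1}|g|^2\le \tfrac{D(K)^2}{2V(K)}\,E(g)$. Decompose $g=g_0+g_1$ into mean plus oscillation. Origin symmetry forces $h_K,h_L$, and hence $g_1$, to consist only of even harmonics of order $\ge 2$; because $\sum_{|k|\ge 2,\ k\ \mathrm{even}}(k^2-1)^{-1}=1$, Cauchy--Schwarz on the Fourier coefficients gives $\max|g_1|^2\le\tfrac1{2\pi}E(g_1)$ (and $E(g_1)\ge 0$ by Wirtinger). The constraint $\int g\,dS_K=0$ writes $g_0$ as a multiple of the pairing of $g_1$ with the nonconstant part $(h_K)_1$ of $h_K$; estimating this by Cauchy--Schwarz for the negative-definite form $-E$ on even harmonics of order $\ge 2$, and using $\int h_K\,d\theta=\mathrm{perim}(K)$ together with $\tfrac14\mathrm{perim}(K)^2=\pi V(K)+\tfrac\pi2 E\big((h_K)_1\big)$, I obtain $2\pi g_0^2\le\big(1-\tfrac{4\pi V(K)}{\mathrm{perim}(K)^2}\big)E(g_1)$. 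Hence $\max|g|^2\le(|g_0|+\max|g_1|)^2\le\tfrac2\pi E(g_1)$, while $E(g)=E(g_1)-2\pi g_0^2\ge\tfrac{4\pi V(K)}{\mathrm{perim}(K)^2}E(g_1)$; combining with $\mathrm{perim}(K)\le\pi D(K)$ (Cauchy's perimeter formula and width $\le$ diameter) gives the claim. Finally, squaring the bound for $\max|\Phi|$ via $(a+b)^2\le 2a^2+2b^2$, inserting the crux estimate and $E(g)=2V(L)(t^2-1)$, and using $(t-1)^2\le(t-1)(t+1)=t^2-1$, I arrive at $\tfrac{V(K)}{4D^2(K)}\max|\Phi|^2\le\tfrac18(t-1)^2+\tfrac12(t^2-1)\le t^2-1$, which is \eqref{thm: groemer}.

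The step I expect to be the main obstacle is the crux estimate, and within it the control of the mean value $g_0$: the naive Fourier bound is too weak for elongated $K$, and one gets enough room only by playing the isoperimetric inequality for $K$ off against $\mathrm{perim}(K)\le\pi D(K)$ — this interplay is precisely why the diameter $D(K)$ has to enter the statement.
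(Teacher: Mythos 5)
Your argument is correct: I checked the two identities ($t^2-1=\tfrac{1}{2V(L)}E(g)$ from expanding the quadratic form with $V(K,g)=0$, and $\Phi=-\tfrac{t-1}{\sqrt{V(K)}}h_K-\tfrac{g}{\sqrt{V(L)}}$), the crux estimate $\max|g|^2\le\tfrac{D(K)^2}{2V(K)}E(g)$ (the constants $\sum_{|k|\ge2,\,k\ \mathrm{even}}(k^2-1)^{-1}=1$, $2\pi g_0^2\le\bigl(1-\tfrac{4\pi V(K)}{\mathrm{perim}(K)^2}\bigr)E(g_1)$, and $\mathrm{perim}(K)\le\pi D(K)$ all check out), and the final assembly giving $\tfrac58(t^2-1)\le t^2-1$. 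Note, however, that the paper itself contains no proof of this statement --- it is imported verbatim from Groemer's survey \cite{Groemer1} --- so there is nothing internal to compare against; your Fourier-series/Wirtinger argument is essentially the method Groemer himself uses for planar stability results, and as written it is a complete, self-contained proof of the quoted inequality (indeed of the slightly stronger bound with $\tfrac58(t^2-1)$ on the right).
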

The Santal\'{o} point of $K$ is characterized by the following property
$$\int_{\mathbb{S}^{n-1}}\frac{u}{h_{K-s}^{n+1}(u)}d\sigma(u)=0.$$
Thus for an arbitrary convex body $K$, the indefinite $\sigma$-integral of $h_{K-s}^{-(n+1)}$ satisfies the sufficiency condition of Minkowski's existence theorem in $\mathbb{R}^n$ (see, for example, Schneider \cite[Theorem 8.2.2]{Sch}). Hence, there exists a unique convex body (up to translation) with curvature function
\begin{equation}\label{e: def Lambda}
f_{\Lambda K}=\frac{V(K)}{V(K^s)}h_{K-s}^{-(n+1)}.
\end{equation}
Moreover, $\Lambda \Phi K=\Phi\Lambda K$ (up to translation) for $\Phi\in GL(n)$, by \cite[Lemma 7.12]{Lutwak3}. Finally, we remark that by the Minkowski inequality for all $L\in \mathcal{K}^2$ there holds $V^2(L)=V(\Lambda L,L)^2\geq V(L)V(\Lambda L).$ Therefore $V(L)\geq V(\Lambda L)$ for all $L\in \mathcal{K}^2,$ with equality if and only if $\Lambda L$ is a translate of $L$. In this paper we always assume that the centroid of $\Lambda K$ is the origin of the plane.
\begin{remark}\label{rem: rem}
If $K\in \mathcal{K}^n$ is of class $\mathcal{C}^{\infty}_+$, then $h_K\in\mathcal{C}^{\infty}$. In fact, by definition of the class $\mathcal{C}^{\infty}_+$, the Gauss map $\nu$ is a diffeomorphism of class $\mathcal{C}^{\infty}$ and so $h_K(\cdot)=\langle \nu^{-1}(\cdot),\cdot \rangle$ is of class $\mathcal{C}^{\infty}$. In this case, since $\Lambda K$ is a solution to the Minkowski problem (\ref{e: def Lambda}) with positive $\mathcal{C}^{\infty}$ prescribed data $\frac{V(K)}{V(K^s)}h_{K-s}^{-(n+1)}$, $\Lambda K$ is of class $\mathcal{C}^{\infty}_{+};$ see Cheng and Yau \cite[Theorem 1]{ChYau}.
\end{remark}
\begin{theorem}\cite{Ivaki}\label{thm: xx} Suppose that $K\in\mathcal{K}_{e}^2$ is of class $\mathcal{C}^{\infty}_{+}$. If $m\le h_Kf_K^{1/3}\le M$ for some positive numbers $m$ and $M$, then there exist two ellipses $E_{in}$ and $E_{out}$ such that $E_{in}\subseteq K\subseteq E_{out}$ and
\[\left(\frac{V(E_{in})}{\pi}\right)^{2/3}=m,~~ \left(\frac{V(E_{out})}{\pi}\right)^{2/3}=M.\]
\end{theorem}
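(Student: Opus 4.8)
The plan is to reduce the statement to a comparison principle for curvature functions together with a sharp area estimate for an extremal ellipse. The comparison principle I would use is: if $K_1,K_2\in\mathcal{K}^2_e$ have continuous curvature functions with $f_{K_1}\le f_{K_2}$ on $\mathbb{S}^1$, then $K_1\subseteq K_2$. Indeed, $f_{K_2}-f_{K_1}$ is nonnegative and even, and $\int_{\mathbb{S}^1}u\,(f_{K_2}-f_{K_1})(u)\,d\sigma(u)=\int u\,dS_{K_2}-\int u\,dS_{K_1}=0$, so by Minkowski's existence theorem it is the curvature function of an origin-symmetric convex body $P$ (normalize its centroid to the origin); then $S_{K_1+P}=S_{K_1}+S_P=S_{K_2}$, and Minkowski's uniqueness theorem, together with the shared symmetry, forces $K_1+P=K_2$, whence $K_1\subseteq K_2$ since $0\in P$. (If $f_{K_1}\equiv f_{K_2}$ then $K_1=K_2$.)

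Next I would record the elementary identities for a centred ellipse $E$ with semi-axes $a,b$: $f_E=(ab)^2h_E^{-3}$, so that $h_Ef_E^{1/3}\equiv (ab)^{2/3}=(V(E)/\pi)^{2/3}$; in particular, for each $c>0$ the centred ellipses with $h_Ef_E^{1/3}\equiv c$ are exactly those of area $\pi c^{3/2}$. The hypothesis $h_Kf_K^{1/3}\ge m$ is equivalent to $f_K\ge m^3h_K^{-3}$. Hence, by the comparison principle, to exhibit $E_{in}$ it is enough to find a centred ellipse $E$ with $f_E\le f_K$ and $V(E)=\pi m^{3/2}$; and since scaling an ellipse down only decreases $f_E$, it suffices to take $E^{\sharp}$, the centred ellipse of \emph{largest} area with $f_{E^{\sharp}}\le f_K$ (which exists, the admissible ellipses being contained in $K$ and hence precompact), and to prove $V(E^{\sharp})\ge\pi m^{3/2}$ (then rescale it down to area exactly $\pi m^{3/2}$). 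Symmetrically, $E_{out}$ is obtained from $E^{\flat}$, the centred ellipse of \emph{smallest} area with $f_{E^{\flat}}\ge f_K$, for which one must show $V(E^{\flat})\le\pi M^{3/2}$.

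The heart of the matter — and where I expect the real difficulty to lie — is this sharp area bound for $E^{\sharp}$ (and, dually, $E^{\flat}$). I would try to extract it from the first-order optimality conditions at the extremal ellipse: writing $E^{\sharp}=\{x:x^{\top}Ax\le1\}$, on the active set $S=\{u\in\mathbb{S}^1:f_{E^{\sharp}}(u)=f_K(u)\}$ one has both $f_K(u)\,h_{E^{\sharp}}(u)^3=(V(E^{\sharp})/\pi)^2$ and a John-type identity $\sum_{u\in S}\lambda(u)\,uu^{\top}=c\,A$ with $\lambda>0$, $c>0$, expressing that $E^{\sharp}$ is wedged by the constraints. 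Feeding into this the hypothesis $f_K(u)h_K(u)^3\ge m^3$ on $S$ and the inclusion $E^{\sharp}\subseteq K$ (so $h_{E^{\sharp}}\le h_K$) should pin $V(E^{\sharp})$ from below by $\pi m^{3/2}$, with equality exactly when $K$ is an ellipse. The subtlety is that the naive substitutions cancel, so the John-type identity has to be used in an essential way. If no clean elementary argument materializes, the alternative — and, given the methods of \cite{Ivaki}, the likely intended one — is to run a suitably normalized centro-affine curvature flow (of affine-normal type) starting from $K$: ellipses are its only self-similar solutions and the flow drives every body to an ellipse, while the extrema $\min h f^{1/3}$ and $\max h f^{1/3}$ are monotone along it; comparing $K$ with the two ellipse barriers carrying the values $m$ and $M$ then sandwiches $K$ as $E_{in}\subseteq K\subseteq E_{out}$.
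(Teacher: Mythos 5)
The framework you set up is sound: the comparison principle is correct as stated (it uses the linearity $S_{K_1+P}=S_{K_1}+S_P$ of the surface area measure, which is special to the plane, and the symmetry to kill the translation ambiguity in Minkowski's uniqueness theorem), the identity $h_Ef_E^{1/3}\equiv (V(E)/\pi)^{2/3}$ for centred ellipses is right, and the reduction of the theorem to the two area bounds $V(E^{\sharp})\ge \pi m^{3/2}$ and $V(E^{\flat})\le \pi M^{3/2}$ is a legitimate reformulation. But those two bounds \emph{are} the theorem, and you do not prove them. Your first-order optimality sketch does not close: on the active set you get $f_K(u)h_{E^{\sharp}}(u)^3=(V(E^{\sharp})/\pi)^2$ together with $f_K(u)h_K(u)^3\ge m^3$ and $h_{E^{\sharp}}\le h_K$, and these three facts combine in the wrong direction -- they are \emph{implied by}, rather than implying, the desired bound $(V(E^{\sharp})/\pi)^2\ge m^3$. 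What you would actually need is a contact direction $u_0$ at which the extremal ellipse simultaneously achieves $h_{E^{\sharp}}(u_0)=h_K(u_0)$ and $f_{E^{\sharp}}(u_0)=f_K(u_0)$ (a second-order osculation from inside), and neither the John-type identity you write down nor anything else in the proposal establishes the existence of such a point. Your own phrasing (``should pin $V(E^{\sharp})$ from below'') signals that this step is a hope, not an argument.

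Your fallback is to invoke a normalized centro-affine/affine normal flow, asserting that $\min_{\mathbb{S}^1} h f^{1/3}$ and $\max_{\mathbb{S}^1} h f^{1/3}$ are monotone along it and that the flow converges to ellipses. That is indeed essentially how the result is actually obtained: the paper under review does not prove Theorem~\ref{thm: xx} at all but imports it from \cite{Ivaki}, where it is established by flow methods. However, the monotonicity of these extrema (a maximum-principle computation on the evolution of $hf^{1/3}$) and the convergence statement are precisely the nontrivial analytic content; stating them as known facts inside a blind proof of the very theorem they are used to prove leaves the argument circular in effect. In short: the elementary reduction is correct and arguably cleaner than what is in the literature, but the proposal contains no complete proof of the one inequality that matters, so it has a genuine gap.
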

\begin{corollary}\label{lem: ellipsoid app}
Suppose that $K\in\mathcal{K}_{e}^2$ is of class $\mathcal{C}^{\infty}_{+}$. If $m\le h_Kf_K^{1/3}\le M$ for some positive numbers $m$ and $M$ and $V(K)=\pi$, then $m\leq 1\leq M.$
Moreover, without any assumption on the area of $K$, we have
\[d_{\mathcal{BM}}(K,B)\leq \left(\frac{M}{m}\right)^{\frac{3}{2}}.\]
\end{corollary}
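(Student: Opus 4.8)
The plan is to read off both assertions from Theorem~\ref{thm: xx} applied to $K$. That theorem produces ellipses $E_{\mathrm{in}}\subseteq K\subseteq E_{\mathrm{out}}$ with $V(E_{\mathrm{in}})=\pi m^{3/2}$ and $V(E_{\mathrm{out}})=\pi M^{3/2}$. For the first assertion one only needs monotonicity of area under inclusion: $\pi m^{3/2}=V(E_{\mathrm{in}})\le V(K)\le V(E_{\mathrm{out}})=\pi M^{3/2}$, so if $V(K)=\pi$ then $m^{3/2}\le 1\le M^{3/2}$, i.e.\ $m\le 1\le M$.

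For the Banach--Mazur estimate, the first step is to replace $E_{\mathrm{in}}$ and $E_{\mathrm{out}}$ by translates symmetric about the origin, keeping the inclusions and the volumes. If $F$ is an ellipse with center $c$, put $F^{\circ}=F-c$, so that $F^{\circ}=-F^{\circ}$ and $V(F^{\circ})=V(F)$. Using $K=-K$ and convexity: if $F\subseteq K$, then for $q\in F^{\circ}$ one has $c+q\in F\subseteq K$ and $q-c=-(c-q)\in K$, so $q=\tfrac12\big((c+q)+(q-c)\big)\in K$, giving $F^{\circ}\subseteq K$; and if $K\subseteq F$, then for $p\in K$ (so $\pm p\in F=c+F^{\circ}$) one has $p-c\in F^{\circ}$ and $p+c=-(-p-c)\in F^{\circ}$, so $p=\tfrac12\big((p-c)+(p+c)\big)\in F^{\circ}$, giving $K\subseteq F^{\circ}$. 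Applying this to $E_{\mathrm{in}}$ and $E_{\mathrm{out}}$, I may assume both are centered at the origin.

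Now choose $T\in GL(2)$ with $TE_{\mathrm{in}}=B$. Then $B\subseteq TK\subseteq TE_{\mathrm{out}}$, and $TE_{\mathrm{out}}$ is a centered ellipse containing $B$, so its semi-axes satisfy $1\le a\le b$; comparing areas, $\pi ab=|\det T|\,V(E_{\mathrm{out}})$ and $\pi=|\det T|\,V(E_{\mathrm{in}})$, whence $ab=V(E_{\mathrm{out}})/V(E_{\mathrm{in}})=(M/m)^{3/2}$ and in particular $b\le ab=(M/m)^{3/2}$. Since $TE_{\mathrm{out}}\subseteq bB=T(bE_{\mathrm{in}})$, applying $T^{-1}$ gives $E_{\mathrm{out}}\subseteq bE_{\mathrm{in}}$, so $E_{\mathrm{in}}\subseteq K\subseteq bE_{\mathrm{in}}$ with $E_{\mathrm{in}}$ a centered ellipse. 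Hence $d_{\mathcal{BM}}(K,E_{\mathrm{in}})\le b$, and since $d_{\mathcal{BM}}(E_{\mathrm{in}},B)=1$ and $d_{\mathcal{BM}}$ is multiplicative on $\mathcal{K}_e^2$,
\[
d_{\mathcal{BM}}(K,B)\le d_{\mathcal{BM}}(K,E_{\mathrm{in}})\,d_{\mathcal{BM}}(E_{\mathrm{in}},B)\le b\le\Big(\frac{M}{m}\Big)^{3/2}.
\]

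Most of this is routine bookkeeping with the affine invariances; the step that deserves care is the reduction to concentric ellipses in the second paragraph, which is exactly where the origin-symmetry of $K$ is used and what keeps the constant in the estimate equal to $(M/m)^{3/2}$.
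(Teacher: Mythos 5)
Your proof is correct and follows essentially the same route as the paper: sandwich $K$ between the two ellipses from Theorem~\ref{thm: xx}, apply a linear map turning one of them into a disk, and bound the Banach--Mazur distance by the ratio $V(E_{\mathrm{out}})/V(E_{\mathrm{in}})=(M/m)^{3/2}$ (the paper normalizes $E_{\mathrm{out}}$ to a disk where you normalize $E_{\mathrm{in}}$, which is immaterial). Your explicit recentering of the ellipses via the origin-symmetry of $K$ is a detail the paper leaves implicit, and is a reasonable addition.
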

\begin{proof}
Let $E_{in}$ and $E_{out}$ be the ellipses from Theorem \ref{thm: xx}. Since $V(E_{out})\geq \pi$ and $V(E_{in})\leq \pi$, the first claim follows (For another proof by Andrews, see \cite[Lemma 10]{An1} in which he does not assume that $K$ is origin-symmetric.). To prove the bound on the Banach-Mazur distance, we may first apply a special linear transformation $\Phi\in SL(2)$ such that $\Phi E_{out}$ is a disk. Then it is easy to see that $\Phi E_{out}\subseteq \frac{V(E_{out})}{V(E_{in})}\Phi E_{in}.$ Therefore
\[\Phi E_{in} \subseteq  \Phi K\subseteq \frac{V(E_{out})}{V(E_{in})}\Phi E_{in},\]
and
\[d_{\mathcal{BM}}(K,B)\leq \frac{V(E_{out})}{V(E_{in})}.\]
\end{proof}
Let $K$ be a convex body with Santal\'{o} point at the origin. In \cite{Lutwak2}, by using the affine isoperimetric inequality, Lutwak proved
\begin{equation}\label{thm: ivaki}
V(K)V(K^{\ast})\leq \omega_n^2\left(\frac{V(\Lambda K)}{V(K)}\right)^{n-1}.
\end{equation}
We will use this inequality for $n=2$ in the proof of the main theorem.
\section{Proof of the main theorem}
We shall begin by proving the claim for bodies in $\mathcal{K}_e^2$ that are of class $\mathcal{C}^{\infty}_{+}.$ By John's ellipsoid theorem, we may assume without losing any generality, after applying a $GL(2)$ transformation, that
\begin{equation}\label{ie: im}
1\leq h_K\leq \sqrt{2}.
\end{equation}
In view of inequality (\ref{thm: ivaki}), inequality $V(K)V(K^{\ast})\geq \frac{\pi^2}{1+\varepsilon}$ gives
\begin{equation}\label{ie: im3}
1\geq\frac{V(\Lambda K)}{V(K)}\geq \frac{1}{1+\varepsilon}.
\end{equation}
We will rewrite (\ref{ie: im3}) as the following equivalent expression
\[\frac{V(K,\Lambda K)^2}{V(\Lambda K)V(K)}-1\leq \varepsilon.\]
Therefore, by Groemer's stability theorem, (\ref{thm: groemer}), we obtain
\begin{align*}
\frac{V(K)}{4D^2(K)}\max_{u\in\mathbb{S}^1}\left|\frac{h_K(u)}{V(K)^{\frac{1}{2}}}-\frac{h_{\Lambda K}(u)}{V(\Lambda K)^{\frac{1}{2}}}\right|^2\leq \varepsilon.
\end{align*}
Thus for every $u\in\mathbb{S}^1$ there holds
\begin{align}\label{ie: im1}
\frac{h_K^2(u)}{V(K)}\left|\frac{V(\Lambda K)^{\frac{1}{2}}}{V(K)^{\frac{1}{2}}}-\frac{h_{\Lambda K}(u)}{h_K(u)}\right|^2\leq \frac{h_K^2(u)}{V(\Lambda K)}\left|\frac{V(\Lambda K)^{\frac{1}{2}}}{V(K)^{\frac{1}{2}}}-
\frac{h_{\Lambda K}(u)}{h_K(u)}\right|^2\leq \frac{32}{\pi} \varepsilon.
\end{align}
Using (\ref{ie: im}) we can estimate the left-hand side of (\ref{ie: im1}) to obtain
\begin{align}\label{ie: 1}
\max_{u\in\mathbb{S}^1}\left|\frac{V(\Lambda K)^{\frac{1}{2}}}{V(K)^{\frac{1}{2}}}-\frac{h_{\Lambda K}(u)}{h_K(u)}\right|^2\leq 64 \varepsilon.
\end{align}
Recall from (\ref{e: def Lambda}) that
\[h_K=\left(\frac{V(K)}{V(K^\ast)}\right)^{\frac{1}{3}}\frac{1}{f_{\Lambda K}^{\frac{1}{3}}}.\]
Plugging this into (\ref{ie: 1}) gives
\begin{align*}
\left(\frac{V(K^\ast)}{V(K)}\right)^{\frac{2}{3}}\max_{u\in\mathbb{S}^1}\left|\frac{V(\Lambda K)^{\frac{1}{2}}}{V(K)^{\frac{1}{2}}}\left(\frac{V(K)}{V(K^\ast)}\right)^{\frac{1}{3}}-
(h_{\Lambda K}f_{\Lambda K}^{\frac{1}{3}})(u)\right|^2\leq 64 \varepsilon.
\end{align*}
On the other hand, as (\ref{ie: im}) also implies $\frac{1}{\sqrt{2}}\leq h_{K^{\ast}}\leq 1$, we deduce that
\begin{align*}
\max_{u\in\mathbb{S}^1}\left|\frac{V(\Lambda K)^{\frac{1}{2}}}{V(K)^{\frac{1}{2}}}\left(\frac{V(K)}{V(K^\ast)}\right)^{\frac{1}{3}}-(h_{\Lambda K}f_{\Lambda K}^{\frac{1}{3}})(u)\right|^2\leq
(64)4^{\frac{2}{3}} \varepsilon.
\end{align*}
In particular, this last inequality leads us to
\begin{align}\label{ie: 2}
\max_{u\in\mathbb{S}^1}(h_{\Lambda K}f_{\Lambda K}^{\frac{1}{3}})(u)-\min_{u\in\mathbb{S}^1}(h_{\Lambda K}f_{\Lambda K}^{\frac{1}{3}})(u)\leq 2^{\frac{25}{6}} \varepsilon^{\frac{1}{2}}.
\end{align}
By multiplying $\Lambda K$ with $\sqrt{\frac{\pi}{V(\Lambda K)}}$ we have $V\left(\sqrt{\frac{\pi}{V(\Lambda K)}}\Lambda K\right)=\pi$.
So by Remark \ref{rem: rem}, Corollary \ref{lem: ellipsoid app}, and (\ref{ie: 2}) we get
\[2^{\frac{25}{6}}\varepsilon^{\frac{1}{2}} \left(\frac{\pi}{V(\Lambda K)}\right)^{2/3}+1\geq \left(\frac{\pi}{V(\Lambda K)}\right)^{2/3}\max_{\mathbb{S}^1}(h_{\Lambda K}f_{\Lambda K}^{\frac{1}{3}}),\]
and
\[1-2^{\frac{25}{6}}\varepsilon^{\frac{1}{2}} \left(\frac{\pi}{V(\Lambda K)}\right)^{2/3}\leq \left(\frac{\pi}{V(\Lambda K)}\right)^{2/3}\min_{\mathbb{S}^1}(h_{\Lambda K}f_{\Lambda K}^{\frac{1}{3}}).\]
Furthermore, notice that by (\ref{ie: im}) and (\ref{ie: im3}) the following inequality holds:
\[1-2^{\frac{25}{6}}\varepsilon^{\frac{1}{2}} \left(\frac{\pi}{V(\Lambda K)}\right)^{2/3}\geq 1-2^{\frac{25}{6}}\varepsilon^{\frac{1}{2}}\left(1+\varepsilon\right)^{\frac{2}{3}}. \]
Take $\varepsilon$ small enough such that
\[1-2^{\frac{25}{6}}\varepsilon^{\frac{1}{2}}\left(1+\varepsilon\right)^{2/3}>0.\]
So far we have proved: If $\varepsilon$ is small enough, then
 \[\max_{\mathbb{S}^1}(h_{\Lambda K}f_{\Lambda K}^{\frac{1}{3}})\leq \left(1+2^{\frac{25}{6}}\varepsilon^{\frac{1}{2}}\left(1+\varepsilon\right)^{2/3} \right)\left(\frac{\pi}{V(\Lambda K)}\right)^{-2/3},\]
and
\[\min_{\mathbb{S}^1}(h_{\Lambda K}f_{\Lambda K}^{\frac{1}{3}})\geq \left(1-2^{\frac{25}{6}}\varepsilon^{\frac{1}{2}}\left(1+\varepsilon\right)^{2/3} \right)\left(\frac{\pi}{V(\Lambda K)}\right)^{-2/3}>0.\]
With the aid of these last inequalities and Corollary \ref{lem: ellipsoid app} we deduce that
\begin{equation}\label{e: bm1}
d_{\mathcal{BM}}(\Lambda K,B)\leq \left(\frac{1+2^{\frac{25}{6}}\varepsilon^{\frac{1}{2}}\left(1+\varepsilon\right)^{2/3}}{1-2^{\frac{25}{6}}\varepsilon^{\frac{1}{2}}\left(1+\varepsilon\right)^{2/3}}\right)^{3/2}.
\end{equation}
We return to inequality (\ref{ie: 1}) and combine it with (\ref{ie: im3}) to get
\begin{align*}
-8\varepsilon^{\frac{1}{2}}+\frac{1}{(1+\varepsilon)^{\frac{1}{2}}}\leq
-8\varepsilon^{\frac{1}{2}}+\frac{V(\Lambda K)^{\frac{1}{2}}}{V(K)^{\frac{1}{2}}}\leq \frac{h_{\Lambda K}}{h_K}\leq 8\varepsilon^{\frac{1}{2}}+\frac{V(\Lambda K)^{\frac{1}{2}}}{V(K)^{\frac{1}{2}}}\leq
1+8\varepsilon^{\frac{1}{2}}.
\end{align*}
Furthermore, take $\varepsilon$ small enough such that $-8\varepsilon^{\frac{1}{2}}+\frac{1}{(1+\varepsilon)^{\frac{1}{2}}}>0.$ Consequently
\begin{equation}\label{e: bm2}
d_{\mathcal{BM}}(K,\Lambda K)\leq \frac{1+8\varepsilon^{\frac{1}{2}}}{-8\varepsilon^{\frac{1}{2}}+\frac{1}{(1+\varepsilon)^{\frac{1}{2}}}}.
\end{equation}
Taking into account (\ref{e: bm1}), (\ref{e: bm2}), and the multiplicativity of the Banach-Mazur distance results in the desired estimate:
\[d_{\mathcal{BM}}(K,B)\leq \left(\frac{1+2^{\frac{25}{6}}\varepsilon^{\frac{1}{2}}\left(1+\varepsilon\right)^{2/3}}{1-2^{\frac{25}{6}}\varepsilon^{\frac{1}{2}}\left(1+\varepsilon\right)^{2/3}}\right)^{3/2}
\left(\frac{1+8\varepsilon^{\frac{1}{2}}}{-8\varepsilon^{\frac{1}{2}}+\frac{1}{(1+\varepsilon)^{\frac{1}{2}}}}\right)\leq 1+\gamma\varepsilon ^{\frac{1}{2}},\]
for some universal $\gamma>0$, provided that $\varepsilon$ is small enough.

It follows from \cite[Section 3.4]{Sch} that the class of $\mathcal{C}^{\infty}_{+}$ origin-symmetric convex bodies is dense in $\mathcal{K}^n_e$. Therefore, an approximation argument will prove that the claim of the main theorem, in fact, holds for any origin-symmetric convex body. To get the more general result, for bodies in $\mathcal{K}^2$, we will first need to recall Theorem 1.4 of B\"{o}r\"{o}czky from \cite{B} and a theorem of Meyer and Pajor from \cite{MP}:
\begin{thmA}[B\"{o}r\"{o}czky, \cite{B}]
For any convex body K in $\mathbb{R}^n$ with $d_{\mathcal{BM}}(K,B)\geq 1+\varepsilon$ for $\varepsilon >0$, there exists an origin-symmetric convex body $C$ and a constant $\gamma'>0$ depending on $n$ such that $d_{\mathcal{BM}}(C,B)\geq 1+\gamma'\varepsilon^2$ and $C$ results from $K$ as a limit of subsequent Steiner symmetrizations
and affine transformations.
\end{thmA}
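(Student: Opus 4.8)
The plan is to manufacture the origin-symmetric body $C$ by Steiner-symmetrizing a suitable affine image of $K$, and to show that the Banach--Mazur distance to $B$ cannot collapse too quickly under this process.

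First I would normalize: after an affine transformation place $K$ in John position, so that $B\subseteq K\subseteq nB$ with $B$ the John ellipsoid. Since $d_{\mathcal{BM}}(\cdot,B)$ is affinely invariant, the hypothesis $d_{\mathcal{BM}}(K,B)\ge 1+\varepsilon$ persists, and it must now be converted into a concrete geometric defect of size comparable to $\varepsilon$: a unit vector $u_0$ together with a lower bound, of order $\varepsilon$, on the deviation of $K$ from a ball as seen in a two-dimensional section through $u_0$ (equivalently on the oscillation of $h_K$ over that section relative to the best inscribed ball, or on the Minkowski asymmetry of $K$). If no such defect were present, $K$ would lie within Banach--Mazur distance $1+o(1)$ of a ball, contradicting the hypothesis once $\varepsilon_0$ has been fixed.

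Next I would produce an origin-symmetric body by Steiner-symmetrizing $K$ with respect to $n$ mutually orthogonal hyperplanes through a common point; since symmetrizations in orthogonal hyperplanes commute (each reflection fixes the remaining hyperplanes and their normal directions), the outcome is symmetric under every coordinate reflection, hence under $x\mapsto -x$. If finitely many symmetrizations already yield the required bound we stop there; otherwise one iterates, interleaving the symmetrizations with affine maps that return each intermediate body to John position. The purpose of these affine maps is to confine the whole orbit to a fixed compact family of convex bodies, so that the procedure converges to an origin-symmetric limit $C$ --- this is precisely the ``limit of subsequent Steiner symmetrizations and affine transformations'' appearing in the statement.

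The crux, and the step I expect to be hardest, is the quantitative one: a \emph{reverse} stability estimate asserting that one Steiner symmetrization, performed in a direction $u$ aligned with the defect of the body, satisfies $d_{\mathcal{BM}}(S_uL,B)-1\ge c\,\bigl(d_{\mathcal{BM}}(L,B)-1\bigr)^{2}$. This cuts against every monotone quantity available for free --- Steiner symmetrization does not increase the mean width or the diameter, and does not decrease the volume product $V(L)V(L^{s})$ --- so none of these can be invoked directly; moreover one cannot simply pass to the central symmetral $\tfrac12\bigl(K+(-K)\bigr)$, which can equal the ball even when $K$ does not (take a linear-order perturbation of $B$ whose odd part is a genuine distortion rather than a translation). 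Instead I would use the chord description of $S_uL$ together with the inclusion $S_uL\subseteq\tfrac12\bigl(L+\rho_uL\bigr)$, with $\rho_u$ the reflection in $u^{\perp}$, and exploit that choosing $u$ in the defect direction forces $S_uL$ to retain a definite amount of non-roundness. The square in the exponent is exactly what this argument yields --- an $L^{2}$/Cauchy--Schwarz passage from the defect of $L$ to that of its symmetral --- and iterating over the $n$ orthogonal symmetrizations changes only the constant by a dimensional factor, giving $d_{\mathcal{BM}}(C,B)\ge 1+\gamma'(n)\,\varepsilon^{2}$.
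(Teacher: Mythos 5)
This statement is imported by the paper from B\"{o}r\"{o}czky's article \cite{B} without proof, so there is no internal argument to compare against; judged on its own merits, your proposal has a genuine gap at exactly the point you yourself flag as the crux. The asserted reverse stability estimate $d_{\mathcal{BM}}(S_uL,B)-1\ge c\,\bigl(d_{\mathcal{BM}}(L,B)-1\bigr)^2$ for a single Steiner symmetrization in a ``defect direction'' \emph{is} the entire content of the theorem, and your sketch of it --- a defect direction extracted from John position, the inclusion $S_uL\subseteq\tfrac12\bigl(L+\rho_uL\bigr)$, and an unspecified ``$L^2$/Cauchy--Schwarz passage'' --- does not constitute a proof. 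Note in particular that $d_{\mathcal{BM}}(L,B)\ge 1+\delta$ means $L$ is far from \emph{every} ellipsoid, not merely from its John ball, so it is not enough to show that $S_uL$ retains some oscillation of the support function over a section: you must rule out that $S_uL$ has drifted close to some \emph{other} ellipsoid, and nothing in the sketch addresses this. Moreover the inclusion $S_uL\subseteq\tfrac12\bigl(L+\rho_uL\bigr)$ points in the wrong direction for your purposes, since it bounds $S_uL$ from above by a body that (as you yourself observe for the central symmetral) can be round even when $L$ is not.

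Second, even granting the per-step estimate, the bookkeeping fails: composing $n$ symmetrizations each satisfying $d-1\ge c\,(d_{\mathrm{prev}}-1)^2$ yields a final lower bound of order $\varepsilon^{2^n}$, not $\varepsilon^{2}$ --- quadratic losses compound in the exponent, not in the constant, so your claim that iterating ``changes only the constant by a dimensional factor'' is incorrect. To land on the exponent $2$ you would need all but one of the steps to be essentially lossless (linear in the defect), which again requires an argument you have not supplied. A smaller point: Steiner symmetrizations in orthogonal hyperplanes do not commute as operations on convex bodies; what is true, and what you actually need, is that $S_{e_i}$ preserves symmetry about $e_j^{\perp}$ for $j\ne i$, so that the composition in any fixed order produces an origin-symmetric body. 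That part of your construction is salvageable, but the two issues above mean the proposal does not establish the theorem.
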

\begin{thmA}[Meyer, Pajor, \cite{MP}]
Let K be a convex body in $\mathbb{R}^n$, $H$ be a hyperplane, and let $K_H$ be the Steiner symmetral of $K$ with respect to $H.$ If $s$ and $s'$
denote the Santal\'{o} points of $K$ and $K_H$, respectively, then $s'\in H$, and $V(K^s)\leq V((K_H )^{s'}).$
\end{thmA}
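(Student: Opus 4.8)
The plan is to establish the two conclusions separately, the inequality being the substantial point. Fix a unit vector $e$ with $H=e^{\perp}$, write points of $\mathbb{R}^n$ as $(y,u)\in H\times\mathbb{R}$, and let $\rho$ be the orthogonal reflection across $H$. For the first claim, note that $K_H$ is by construction $\rho$-invariant; since $\rho$ is a linear isometry, $(K_H-\rho x)^{\ast}=\rho((K_H-x)^{\ast})$ for every $x\in\mathrm{int}\,K_H$, so the function $x\mapsto V((K_H)^{x})$, whose unique minimizer is $s'$, is $\rho$-invariant. Its minimizer is therefore fixed by $\rho$, and the fixed set of $\rho$ is exactly $H$; hence $s'\in H$.

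For the inequality I would exhibit $K$, $K_H$ and $\rho K$ as three members of one \emph{shadow system} in the direction $e$. Let $\Omega\subseteq H$ be the projection of $K$ and let $f,g\colon\Omega\to\mathbb{R}$ be its upper (concave) and lower (convex) boundary functions, so $K=\{(y,u):y\in\Omega,\ g(y)\le u\le f(y)\}$. Put $\beta=\tfrac12(f+g)$ and $m=\tfrac12(f-g)\ge0$, and for $t\in[-1,1]$ set
\[K_t=\{(y,u):y\in\Omega,\ |u-t\beta(y)|\le m(y)\},\]
the body obtained by sliding each vertical chord of $K_H$ by $t\beta(y)\,e$. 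The chord endpoints $t\beta(y)\pm m(y)$ are affine in $t$, so $\{K_t\}$ is a shadow system; concretely $h_{K_t}(\omega,\tau)=\max_{y\in\Omega}\big[\langle\omega,y\rangle+\tau t\beta(y)+|\tau|m(y)\big]$ is convex in $t$ for each $(\omega,\tau)$. The upper and lower boundary functions of $K_t$ are $\tfrac{1+t}{2}f-\tfrac{1-t}{2}g$ and $\tfrac{1+t}{2}g-\tfrac{1-t}{2}f$, which are concave and convex respectively on $[-1,1]$, so each $K_t$ is a convex body; and one checks $K_1=K$, $K_0=K_H$, $K_{-1}=\rho K$.

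Let $s_t$ be the Santal\'o point of $K_t$ and set $\mathcal{V}(t)=V((K_t)^{s_t})=\min_x V((K_t-x)^{\ast})$. The engine of the argument is the convexity of $t\mapsto 1/\mathcal{V}(t)$ along the shadow system. Granting it, since $K_{-1}=\rho K$ is isometric to $K=K_1$ we have $\mathcal{V}(-1)=\mathcal{V}(1)=V(K^s)$, while $\mathcal{V}(0)=V((K_H)^{s'})$; evaluating convexity at the midpoint yields
\[\frac{1}{V((K_H)^{s'})}=\frac{1}{\mathcal{V}(0)}\le\frac12\Big(\frac{1}{\mathcal{V}(-1)}+\frac{1}{\mathcal{V}(1)}\Big)=\frac{1}{V(K^s)},\]
which is precisely $V(K^s)\le V((K_H)^{s'})$.

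The main obstacle is this convexity. I would reduce it to a fixed-center statement: since $1/\mathcal{V}(t)=\sup_{x}1/V((K_t-x)^{\ast})$ and a supremum of convex functions is convex, it suffices to prove that $t\mapsto 1/V((K_t-x)^{\ast})$ is convex for each fixed $x$. Translating by $x$ turns $L_t:=K_t-x$ into a shadow system polarized about the fixed origin, and the claim becomes the convexity of $t\mapsto 1/V(L_t^{\ast})$, which is the heart of the matter. I would attack it through $V(L_t^{\ast})=\tfrac1n\int_{\mathbb{S}^{n-1}}h_{L_t}^{-n}\,d\sigma$, using that along a shadow system the boundary moves on lines parallel to $e$ to reduce, by Fubini in the $e$-direction and a one-dimensional computation, to the elementary convexity already visible for slabs; the constraint $0\in\mathrm{int}\,L_t$ is accommodated by working on the subinterval where the polar volume stays finite. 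A self-contained alternative, closer to the original approach, is to split $(K-x)^{\ast}$ into its portions above and below $H$ and run a chordwise one-dimensional Santal\'o inequality; I expect it to give the same balancing but with heavier bookkeeping, which is why I favor the shadow-system formulation.
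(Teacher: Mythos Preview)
The paper does not prove this theorem; it is quoted from Meyer and Pajor \cite{MP} and used as a black box in the reduction of the general case to the origin-symmetric one. So there is no ``paper's proof'' to compare against.

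Your argument for $s'\in H$ is correct, and your shadow-system setup is clean: $K_1=K$, $K_0=K_H$, $K_{-1}=\rho K$ is indeed a shadow system on $[-1,1]$, and the reduction of the inequality to the midpoint estimate for the convex function $t\mapsto 1/\mathcal V(t)$ is valid. That convexity statement is in fact a theorem of Meyer and Reisner (Mathematika, 2006), so your overall route is a known and legitimate one, different from the original section-by-section computation of Meyer--Pajor.

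The genuine gap is in how you propose to prove that convexity. You write $1/\mathcal V(t)=\sup_x 1/V((K_t-x)^*)$ and claim it suffices that each fixed-center function $t\mapsto 1/V((K_t-x)^*)$ is convex. This is false. Take the translated ball $K_t=B+te$ in $\mathbb R^2$, a bona fide shadow system; with $x=0$ one has $h_{K_t}(u)=1+t\langle e,u\rangle$ and
\[
V(K_t^*)=\tfrac12\int_0^{2\pi}(1+t\cos\theta)^{-2}\,d\theta=\frac{\pi}{(1-t^2)^{3/2}},
\]
so $1/V(K_t^*)=(1-t^2)^{3/2}/\pi$, whose second derivative is negative for $|t|<1/\sqrt2$. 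In $\mathbb R^1$ the same example gives $(1-t^2)/2$, strictly concave everywhere. Thus the ``sup of convex functions'' reduction collapses: the individual fixed-center functions are typically \emph{concave} near the center, and only the optimized (Santal\'o) version is convex. The integral formula $V(L_t^*)=\tfrac1n\int h_{L_t}^{-n}\,d\sigma$ you invoke does not help here, since convexity of $h_{L_t}$ in $t$ gives no convexity for $h_{L_t}^{-n}$.

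The actual proof of the convexity of $1/\mathcal V(t)$ (and the original Meyer--Pajor argument) uses precisely the ``chordwise one-dimensional Santal\'o inequality'' you list as your alternative: one writes the polar volume via Fubini in the $e$-direction and exploits the optimality of the Santal\'o point section by section. That is not merely heavier bookkeeping---it is where the content is; your preferred route outsources the whole difficulty to a lemma whose proposed proof is incorrect.
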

Now we give the proof in the general case by contraposition. Let $K$ be a convex body such that \[d_{\mathcal{BM}}(K,B)>1+\left(\frac{\gamma}{\gamma'}\right)^{\frac{1}{2}}\varepsilon^{\frac{1}{4}},\]
where $\gamma'$ is the constant in B\"{o}r\"{o}czky's theorem. So by the last two theorems, there exists an origin-symmetric convex body $C$, such that $V(C)V(C^{\ast})\geq V(K)V(K^{s})$ and $d_{\mathcal{BM}}(C,B)>
1+\gamma\varepsilon^{\frac{1}{2}}.$ Moreover, $d_{\mathcal{BM}}(C,B)> 1+\gamma\varepsilon^{\frac{1}{2}}$ implies that
\[V(C)V(C^{\ast})< \frac{\pi^2}{1+\varepsilon}.\]
Therefore
\[V(K)V(K^{s})< \frac{\pi^2}{1+\varepsilon}.\]
The argument is complete.

\bibliographystyle{amsplain}

\end{document}